\long\def\@makecaption#1#2{
 \vskip 10pt
 \setbox\@tempboxa\hbox{\bf #1: \sf #2}
 \ifdim \wd\@tempboxa >\hsize \bf #1: \sf #2\par \else \hbox
to\hsize{\hfil\box\@tempboxa\hfil}
 \fi}
\def\qed{\kern 6pt\hbox{\vrule\vbox to 6pt{\hrule width
                            6pt\vfil\hrule}\vrule}}
\newcommand{\R}{\mathbb R}
\newcommand{\conv}{{\rm conv}}
\newtheorem{theo}{Theorem}
\newtheorem{lem}[theo]{Lemma}
\newenvironment{proof}[1][Proof]{\textbf{#1.} }{}
\def\conv{\mathrm{conv}}
\def\inte{\mathrm{int}}
\def\St{\mathrm{St}}
\begin{document}
\title{ Ellipsoids are the only local maximizers of the volume product
\footnotetext{2010 Mathematics Subject Classification 52A20, 52A40.}
\footnotetext{Key words and phrases: convex bodies, volume, volume-product, Blaschke-Santal\'o inequality.}
}
\author{{ Mathieu Meyer and Shlomo Reisner}}
\date{}

\maketitle

\vskip 10mm
\begin{abstract} Using previous results about shadow systems and Steiner symmetrization, we prove that the local maximizers of
the volume product of convex bodies are actually the global
maximizers, that  is: ellipsoids.

\end{abstract}

\vskip 10mm Let $K\subset \R^n$ be a convex body (a compact and
convex set with non-empty interior). For $z\in\inte(K)$, the
interior of $K$, let $K^z$ be the polar of $K$ with respect to
$z$:
$$K^z=\{y\in  \R^n; \langle y-z,x-z\rangle \le 1\hbox{ for every  $x\in K$}\},$$
where $\langle.\,,\,.\rangle$ denotes the standard scalar product in
$\R^n$. It is well known that $K^z$ is also a convex body,
that $z\in \inte(K^z)$  and that $(K^z)^z=K$. The {\em volume
product\/} of $K$, $\Pi(K)$ (or $\Pi_n(K)$ if the dimension is to be specified), is given by the
following formula:
$$\Pi(K):=\min_{z\in \inte(K)} |K| \ |K^z|\,,$$
where $|A|$ denotes the Lebesgue  measure of a Borel subset $A$ of $\R^n$. The unique point $z= s(K)\in K$,
where this minimum is reached, is called the Santal\'o point of $K$. We denote  $K^*= K^{s(K)}$.
Blaschke \cite {B} (1917) proved for dimensions $n=2$ and $n=3$ that
$$\Pi(K)= |K| \ |K^*|\le \Pi(B_2^n)\,,$$
where $B_2^n=\{x\in \R^n; |x|\leq 1\}$ ($|x|=\sqrt{\langle x,x\rangle}$) is the Euclidean unit ball in $\R^n$.
This was generalized to all dimensions by Santal\'o \cite{San} (1948).
\vspace{2mm}

It  then took some time to establish the case of equality: one has $\Pi(K)=\Pi(B_2^n)$ if and only if $K$
is an ellipsoid.
This was done by Saint-Raymond \cite{Sai} (1981), when $K$ is centrally symmetric and by Petty \cite{P} (1982),
in the general case. Another proof was given by Meyer and Pajor \cite{MP} (1990),
based on Steiner symmetrization.

Campi  and Gronchi \cite{CG} (2006), introduced the use of shadow systems for volume product problems.
Fix a direction $u\in S^{n-1}$ . A shadow  system $(K_t)$ along the direction $u$ is a family of convex sets $(K_t)$, $t\in [a,b]$
such that
$$K_t=\conv\{x+t\alpha(x) u\,; x\in A\}$$
where $A$ is a given bounded subset of $\R^n$ and $\alpha:A\to \R$ ia a given bounded function, called the {\em speed}
of the shadow system.
An example is given by the {\em Steiner symmetrization\/} of a convex body $K$ with respect to the hyperplane $u^{\perp} $ orthogonal
to $u\in S^{n-1}$. If $K$ is described  as
$$K=\{y+su\,; y\in P_u K\,, s\in I(y)\}\,,$$
where $P_u $ is the orthogonal projection onto $u^{\perp} $ and $I(y)$ is some nonempty closed interval depending
on $y\in P_u K$.  The Steiner symmetral $\St_u(K)$ is defined by
$$\St_u(K):=\left\{y+su\,; y\in P_u K, s\in \frac{I(y)-I(y)}{2}\right\}\,.$$

For $t\in [-1,1]$, let
$$K_t=\left\{y+su\,; y\in P_u K\,,\, s\in \frac{1-t}{2}I(y)-\frac{1+t}{2}I(y)\right\}\,.$$

The family $(K_t)$, $t\in [-1,1]$ forms a shadow
system such that $K_{-1}=K$, $K_{1}$ is the reflection of $K$ with respect to $u^{\perp}$ and $K_0 $ is
the Steiner symmetral of $K$ with respect to $u^{\perp}$. As a matter of fact,
setting $A=K_0$, and  $I(y)=[a(y),b(y)]$ for $y\in P_u K$, one has for $t\in [-1,1]$:
$$K_t=\left\{z-t\frac{a(P_u z)+b(P_u z)}{2}u\,; z\in K_0\right\}\,.$$
\vskip 3mm

The following theorem was proved in \cite{MR2} as Theorem~1 and  Proposition~7 there.
\begin{theo}
\label{th-1}
Let $K_t$, $t\in [a,b]$, be a shadow system in $\R^n$. Then $t\to |K_t^*|^{-1}$ is a convex function
on $[a,b]$. If $t\to |K_t|$ and $t\to |K_t^*|^{-1}$ are both affine functions in $[a,b]$ then, for all
$t\in [a,b]$, $K_t$ is an affine image of $K_a$, $K_t=A_{u,t}(K_a)$. Where $A_{u,t}$ is an affine
transformation that satisfies $P_u\,A_{u,t}=P_u$. More precisely: for some $v\in \R^n$ and some $c\in \R$,
one has for all $t\in [-1,1]$ and all $x\in \R^n$:
$$A_{u,t}(x)=x+(t-a)\big(\langle x,v \rangle +c\big)u\,.$$
\end{theo}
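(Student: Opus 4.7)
The plan is to lift to an $(n{+}1)$-dimensional geometry: set $\widetilde{K}=\conv\{(x,\alpha(x)):x\in A\}\subset \R^n\times\R$, so that $K_t$ is the image of $\widetilde{K}$ under the linear projection $T_t(x,s)=x+tsu$. A direct support-function computation shows that
$$ y\in K_t^\circ \;\Longleftrightarrow\; (y,t\langle u,y\rangle)\in \widetilde{K}^\circ, $$
which identifies $K_t^\circ$, up to a Jacobian factor $\sqrt{1+t^2}$, with a section of the fixed body $\widetilde{K}^\circ$ by the hyperplane $H_t=\{(y,z):z=t\langle u,y\rangle\}$. This hyperplane pivots around the fixed codimension-two subspace $u^\perp\times\{0\}$ as $t$ varies, and this geometric picture drives everything that follows.

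For the convexity of $t\mapsto|K_t^*|^{-1}$, I would first establish a fixed-center version: translating the shadow system along any path $z(t)=z_0+tcu$ yields again a shadow system (the shift is absorbed into the speed function), so it suffices to show that $t\mapsto|K_t^\circ|^{-1}$ is convex when $0\in\inte(K_t)$ for all $t$. This is the Campi--Gronchi result, proved by slicing $K_t^\circ$ by lines parallel to $u$ and controlling chord lengths via a Brunn--Minkowski / Busemann-type estimate applied to the rotating sections of $\widetilde{K}^\circ$. To bootstrap from a fixed center to the Santal\'o center, fix $t_0\in[a,b]$ and choose $z_0,c$ so that $z_0+t_0cu=s(K_{t_0})$; the convex function $t\mapsto|K_t^{z_0+tcu}|^{-1}$ then lies pointwise below $|K_t^*|^{-1}$ (since the Santal\'o point minimizes the polar volume) and meets it at $t_0$. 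Varying $t_0\in[a,b]$ displays $|K_t^*|^{-1}$ as a pointwise supremum of convex minorants, hence it is convex.

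For the equality case, affineness of both $t\mapsto|K_t|$ and $t\mapsto|K_t^*|^{-1}$ forces equality in the convexity estimates at every $t_0$, both for the primal volume (Rogers--Shephard type) and for the reciprocal polar volume (Busemann type). Tracking equality through the chord-length argument applied to $\widetilde{K}^\circ$ forces the sections $\widetilde{K}^\circ\cap H_t$ to evolve affinely with $t$; equivalently $\widetilde{K}$ itself must have a cylindrical structure that forces the speed function $\alpha$ to coincide on $A$ with an affine functional $x\mapsto\langle x,v\rangle+c$ (with $v$ taken in $u^\perp$ after an obvious reduction). Once $\alpha$ is affine, the map $x\mapsto x+t\alpha(x)u$ is itself affine in $x$, so $K_t$ is an affine image of $A$; substituting $z=x+a\alpha(x)u\in K_a$ into this map produces the explicit formula $A_{u,t}(z)=z+(t-a)(\langle z,v\rangle+c)u$, and $P_uA_{u,t}=P_u$ holds because the added displacement is a multiple of $u$.

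The main obstacle is the rigidity: the convexity half is a fairly robust combination of the $(n{+}1)$-dimensional lift, the origin-polar convexity of Campi--Gronchi, and the Santal\'o-translation supremum trick, but the equality half must track saturation through several concurrent convexity inequalities and show that their simultaneous equality forces the speed $\alpha$ to be globally affine on $A$, rather than satisfying only some weaker condition on part of $A$.
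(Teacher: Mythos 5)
You do not actually have a ``paper proof'' to match here: the paper quotes Theorem~\ref{th-1} from \cite{MR2} (Theorem~1 and Proposition~7 there) and proves nothing of it itself. Judged on its own, your argument has a decisive gap in the convexity half. Your reduction (``translate along $z(t)=z_0+tcu$, so it suffices to show that $t\mapsto|K_t^\circ|^{-1}$ is convex whenever $0\in\inte(K_t)$'') rests on an intermediate statement that is false. Campi and Gronchi prove fixed-center convexity only for shadow systems of bodies centrally symmetric about that common center --- exactly the restriction the present paper points out right after Theorem~\ref{th-1} --- and without symmetry it fails. Concretely, take the shadow system along $u=e_1$ in $\R^2$ with $A$ the vertex set of $[-1,1]^2$, speed $1$ on the two vertices with first coordinate $-1$ and speed $0$ on the other two, so that $K_t=[-1+t,1]\times[-1,1]$ and $0\in\inte(K_t)$ for $|t|<1$. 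A direct computation gives $|K_t^\circ|=\frac{1}{1-t}+1$, hence
$$|K_t^\circ|^{-1}=\frac{1-t}{2-t}\,,$$
which is strictly concave near $t=0$ (the one-dimensional system $K_t=[-1+t,1]$ gives the same function). So the functions $t\mapsto|K_t^{z_0+tcu}|^{-1}$ that you want to use as convex minorants touching $|K_t^*|^{-1}$ at each $t_0$ need not be convex, and the supremum-of-minorants argument collapses. This is not a repairable detail: coping with the moving Santal\'o point in the absence of any fixed-center convexity is precisely what Theorem~1 of \cite{MR2} adds beyond the symmetric Campi--Gronchi result, and it requires a genuinely different argument.

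The rigidity half is likewise only a programme: ``tracking equality through the chord-length argument'' is where all the work lies (Proposition~7 of \cite{MR2}), and since the inequalities you propose to saturate are the ones coming from the false fixed-center step, there is at present nothing to track; you also still owe an argument that simultaneous equality forces the speed to agree with an affine functional on all of $A$ rather than on some subset. The parts of your sketch that are correct are the classical lift $\widetilde{K}=\conv\{(x,\alpha(x)):x\in A\}$, the identity $y\in K_t^\circ\Leftrightarrow(y,t\langle u,y\rangle)\in\widetilde{K}^\circ$ with the factor $\sqrt{1+t^2}$, and the final derivation of the explicit form of $A_{u,t}$ (with $P_uA_{u,t}=P_u$) once the speed is known to be affine; but neither the convexity statement nor the equality case is proved as written.
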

\vskip 1mm
This theorem was extending  and strengthening a result of Campi and Gronchi \cite{CG}, who proved the
first part  of it when the shadow system $(K_t)$ is composed of bodies that are centrally symmetric with
respect to the same center of symmetry.
\vspace{1mm}

As a consequence of Theorem~\ref{th-1}, one gets the main result of this paper:

\begin{theo}
\label{th-2} The convex bodies $K$ in $\R^n$  which are  local maximizers (with
respect  to the Hausdorff distance or to the Banach Mazur distance) of
 the volume product in $\R^n$  are the ellipsoids.
\end{theo}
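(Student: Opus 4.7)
My approach is to apply Theorem~\ref{th-1} to the Steiner symmetrization shadow system $(K_t)_{t\in[-1,1]}$ in each direction $u\in S^{n-1}$, use local maximality to squeeze the convex function $t\mapsto|K_t^*|^{-1}$ to a constant, invoke the equality case of Theorem~\ref{th-1} to conclude that $\St_u K$ is an affine image of $K$ for every $u$, and then close by iterating Steiner symmetrizations towards a ball.

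Fix a direction $u\in S^{n-1}$ and consider the shadow system $(K_t)_{t\in[-1,1]}$ described in the introduction, with $K_{-1}=K$, $K_1$ the reflection of $K$ across $u^{\perp}$, and $K_0=\St_u K$. The explicit description $K_t=\{z-t\frac{a(P_uz)+b(P_uz)}{2}u:z\in K_0\}$ realizes $K_t$ as a shear of $K_0$, so $|K_t|=|K|$ is constant in $t$. By Theorem~\ref{th-1}, $t\mapsto|K_t^*|^{-1}$ is convex on $[-1,1]$. Since $K_1$ is congruent to $K_{-1}=K$ we have $|K_{-1}^*|^{-1}=|K_1^*|^{-1}=|K^*|^{-1}$, and convexity already yields $|K_t^*|\ge|K^*|$ for every $t\in[-1,1]$.

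For the reverse inequality I invoke local maximality. Because $K_t\to K$ in Hausdorff distance (hence also in Banach-Mazur distance) as $t\to -1$, either hypothesis of Theorem~\ref{th-2} gives $\Pi(K_t)\le\Pi(K)$ on some interval $[-1,-1+\ee]$. Combined with $|K_t|=|K|$, this produces $|K_t^*|\le|K^*|$ on that interval, so $|K_t^*|^{-1}$ is identically $|K^*|^{-1}$ there. A convex function on $[-1,1]$ which is constant on an interval adjacent to its left endpoint and whose value at the right endpoint equals that constant must be constant throughout (the right derivative is nonnegative just to the right of $-1+\ee$, forcing $f$ to be nondecreasing on $[-1+\ee,1]$, hence constant by the matching endpoints). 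Since $|K_t|$ and $|K_t^*|^{-1}$ are both affine in $t$, the equality case of Theorem~\ref{th-1} applies, yielding $K_t=A_{u,t}(K)$ for every $t$; in particular $\St_u K=A_{u,0}(K)$ is an affine image of $K$.

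Since $\Pi$ is affine invariant, every affine image of $K$ is again a local maximizer, so the same conclusion propagates: any body obtained from $K$ by a finite sequence of Steiner symmetrizations is an affine image of $K$. Choosing directions $u_1,u_2,\ldots$ so that the iterated Steiner symmetrals $K^{(m)}$ converge in Hausdorff distance to a Euclidean ball, one gets volume-preserving affine images of $K$ tending to that ball. Uniform diameter and inradius bounds near the limit confine the corresponding unimodular affine transformations to a compact subset of the affine group, and extracting a subsequential limit produces an affine map sending $K$ onto a ball, so $K$ is an ellipsoid. The main obstacle is this last rigidity step: it requires combining the convergence theorem for iterated Steiner symmetrizations with a compactness argument for unimodular affine maps, both of which are delicate but standard.
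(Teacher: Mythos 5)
Your argument is correct, and its first half coincides with the paper's: you build the same Steiner shadow system $(K_t)_{t\in[-1,1]}$, use Theorem~\ref{th-1} to get convexity of $t\mapsto |K_t^*|^{-1}$, note $f(-1)=f(1)$ (the paper phrases this as evenness of $f$ coming from the reflection symmetry $K_t=R_u K_{-t}$), squeeze with local maximality to make $f$ constant on $[-1,1]$ (your convexity argument with the constant initial interval and matching endpoint values is sound), and invoke the equality case of Theorem~\ref{th-1} to get $\St_u K=A_{u,0}(K)$ for every $u$. Where you genuinely diverge is the endgame. The paper isolates the statement ``$\St_u K$ is an affine image of $K$ with $P_uA_u=P_u$ for all $u$ implies $K$ is an ellipsoid'' as Lemma~\ref{lemma 1} and proves it directly via uniqueness of the John ellipsoid: normalizing so that the John ellipsoid is $B_2^n$, the maps $A_u$ are forced to be orthogonal and to fix $u^{\perp}$, whence $K$ is symmetric about every hyperplane through the origin and is a ball. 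You instead exploit affine invariance of the volume product (and hence of local maximality -- for the Hausdorff metric this deserves the one-line remark that affine maps are bi-Lipschitz) to propagate the conclusion through iterated symmetrizations, appeal to the classical theorem that a suitable sequence of Steiner symmetrals converges to a ball, and close with a compactness argument for unimodular affine maps; this works, and notably does not use the extra structure $P_uA_u=P_u$ at all. The trade-off: the paper's route is self-contained and elementary (a finite, static argument from John-ellipsoid uniqueness), while yours imports the nontrivial (though standard) convergence theorem for iterated Steiner symmetrization plus the affine-group compactness step -- in fact your endgame is essentially the symmetrization-sequence proof of the characterization lemma that the paper attributes to Grinberg in its Remark, here adapted so that local maximality replaces the midpoint-hyperplane hypothesis.
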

\vspace{2mm}

\noindent
{\bf Remark.}
 A partial result in this direction was proved by Alexander, Fradelizi and Zvavich \cite{AFZ} who observed that no polytope
can be a local maximizer for the volume product.
\vspace{3mm}

\noindent
\begin{proof}[Proof of Theorem~\ref{th-2}]
Suppose that $K$ is a local maximizer. Let $u\in S^{n-1}$ and
${\mathrm St}_u(K)$ be the Steiner symmetral of $K$ with respect
to $u^{\perp}$.

With the above notations we describe  the Steiner symmetral of $K$ as $K_0$ of  a shadow system $K_t$,
 $t\in [-1, 1]$, with  $K_{-1}=K$ and $K_{1}$ being the mirror
reflection of $K$ about  $u^{\perp}$.
It follows from the definition of this shadow system that it preserves the volume of $K$: one has $|K_t|=|K|$ for all
$t\in [-1,1]$.

By construction, for all $t$,
 $K_t$ is the mirror reflection of $K_{-t}$ with respect to $u^{\perp}$. It follows that  $(K_t)^*$ is also the mirror
 reflection of $(K_{-t})^*$ with respect to $u^{\perp}$
 Let $$f(t)= (|K| \ |(K_t)^*|)^{-1}= \frac{1}{\Pi_n(K_t)}\,.$$

 It is clear that the function $t\to K_t$ is continuous for both
 the Hausdorff and the Banach-Mazur distances. Thus such is also
 the function $t\to (K_t)^*$. It follows that $f$ is continuous on $[-1,1]$.

 By theorem \ref{th-1}, $f$ is convex on $[-1,1]$ and by construction,  it is even.
 Thus  $f(t)\le f(-1)=f(1)$  for all $t\in [-1,1]$ and $f$ has its absolute minimum at $0$.
  Since $K$ is a local maximum of the volume product (i.e, a local minimum of $f$), one has
 for some $-1<c\le 0$ , $f(t)\ge f(-1)$ for all $t\in [-1,c]$. Thus $f$ is constant on $[-1,c]$.
 It now follows from its convexity and the preceding observations,
 that $f$ is actually constant on $[-1,1]$ and
$|(K_t)^*|=|K^*|$ for $t\in [-1, 1]$.

From the second part of theorem \ref{th-1} we conclude now that $K_0=\St_u(K)$ is an image  of $K_{-1}=K$
under an affine transformation having special properties.
Since this fact is true for any $u\in S^{n-1}$, application of the next lemma completes the proof.

\begin{lem}\label{lemma 1}
Let $K$ be a convex body such that, for all $u\in S^{n-1}$, $\St_u(K)$ is an image of  $K$,
$St_u(K)=A_u(K)$ where $A_u$ is an affine transformation that satisfies $P_u\,A_u=P_u$. Then
(and only then) $K$ is an ellipsoid.
\end{lem}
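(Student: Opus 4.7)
The plan is to translate the hypothesis into the classical condition that the midpoints of parallel chords of $K$ in every direction lie on a hyperplane, and then apply the known midpoint-surface characterization of ellipsoids.

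First I would make $A_u$ explicit. Since $A_u$ is affine and $P_u A_u = P_u$, the shift $A_u(x)-x$ lies in $\R u$, so
\[
A_u(x) = x + (\langle x,v_u\rangle + c_u)\,u
\]
for some $v_u\in\R^n$ and $c_u\in\R$. Write $v_u = v_u^\perp + \alpha_u u$ with $v_u^\perp\in u^\perp$, and parametrize $K$ as $\{y+su : y\in P_u K,\ a_u(y)\le s\le b_u(y)\}$. The image under $A_u$ of the chord above $y$ is an interval of length $|1+\alpha_u|(b_u(y)-a_u(y))$, which must coincide with the chord of $\St_u(K)$ above $y$ (centered at $y$ and of length $b_u(y)-a_u(y)$). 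A length comparison forces $(1+\alpha_u)^2=1$, and a midpoint comparison then yields that
\[
m_u(y) \;:=\; \tfrac{1}{2}\bigl(a_u(y)+b_u(y)\bigr)
\]
is an affine function of $y\in P_u K$ (either $m_u(y) = -(\langle y,v_u^\perp\rangle+c_u)$ when $\alpha_u=0$, or $m_u(y)=\langle y,v_u^\perp\rangle+c_u$ when $\alpha_u=-2$). Hence for every $u\in S^{n-1}$, the locus of midpoints of chords of $K$ parallel to $u$ is contained in an affine hyperplane $H_u\subset\R^n$.

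Second, I would apply the classical midpoint-surface characterization of ellipsoids (due to Brunn in dimension three and generalized by Blaschke and others): a convex body in $\R^n$ all of whose midpoint surfaces for parallel chords are planar is an ellipsoid. For a self-contained derivation one can first verify, by the Fubini identity
\[
\int_K x\,dx = \int_{P_uK}\bigl(b_u(y)-a_u(y)\bigr)(y+m_u(y)u)\,dy,
\]
that the centroid of $K$ is a convex combination of points of $H_u$ and therefore belongs to $H_u$ for every $u$; after translating so the centroid lies at the origin, each $m_u$ is linear. Restricting to arbitrary two-dimensional sections through the origin reduces the claim to the planar analogue of the same statement---a classical characterization of ellipses by collinearity of midpoints of parallel chords---and a standard assembly argument then recovers the full ellipsoid.

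The main obstacle lies in the second step. Step 1 is a direct length-and-midpoint comparison on the chords. By contrast, the passage from midpoint-planarity in every direction to the conclusion that $K$ is an ellipsoid is the substantive content of the lemma and rests on a classical but nontrivial result; a fully self-contained proof requires the planar midpoint-line characterization of ellipses together with a higher-dimensional gluing argument for the sectional ellipses.
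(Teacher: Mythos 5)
Your Step 1 is correct and is exactly the equivalence the paper itself records in the Remark after the lemma: writing $A_u(x)=x+(\langle x,v_u\rangle+c_u)u$ (forced by $P_uA_u=P_u$) and comparing each chord of $A_u(K)$ over $y\in P_uK$ with the corresponding symmetrized chord does give $|1+\langle u,v_u\rangle|=1$ and hence that the midpoints $y+m_u(y)u$ lie on a hyperplane $H_u$. The issue is Step 2. As you yourself acknowledge, the passage from ``midpoint loci are hyperplanes for every direction'' to ``$K$ is an ellipsoid'' is the entire substance of the lemma (it is precisely the Bertrand--Brunn--Gruber theorem cited in the paper), and your self-contained sketch does not close it: the planar statement that collinearity of midpoints of parallel chords in every direction characterizes ellipses is itself a nontrivial classical theorem, and the ``standard assembly argument'' that a body all of whose two-dimensional sections through a fixed interior point are ellipses must be an ellipsoid is a second nontrivial classical theorem; neither is proved. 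So as a blind proof the argument either rests on citing the classical characterization (legitimate, but it outsources exactly what the lemma asserts) or it has a genuine gap at the two places just named.

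The paper takes a different and fully self-contained route that avoids the midpoint-locus theorem altogether, exploiting the affine-map form of the hypothesis directly. After an affine normalization one may assume the John ellipsoid of $K$ is $B_2^n$. Then the John ellipsoid of $\St_u(K)=A_u(K)$ is $A_u(B_2^n)$; since Steiner symmetrization preserves volume, $|\det A_u|=1$, and since $B_2^n\subset K$ is symmetric about $u^{\perp}$ one has $B_2^n\subset \St_u(K)$, so uniqueness of the John ellipsoid forces $A_u(B_2^n)=B_2^n$. Thus $A_u$ is orthogonal, and the constraint $P_uA_u=P_u$ leaves only the identity or the reflection in $u^{\perp}$; in either case $K$ is symmetric about $u^{\perp}$, and symmetry about every central hyperplane makes $K$ a Euclidean ball. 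If you want a complete argument along your lines, you would need to prove (or explicitly invoke) the planar midpoint-line characterization and the sections-to-body gluing step; alternatively, adopt the John-ellipsoid argument, which converts the hypothesis into a rigidity statement about a single canonically attached ellipsoid in a few lines.
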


\noindent

\noindent
{\bf Remark.\ }
Lemma~\ref{lemma 1} can be formulated in an equivalent form as: {\it Let $K$ be a convex body such that, for all $u\in S^{n-1}$,
the centers of the chords of $K$ that are parallel to $u$ are located on a hyperplane. Then (and only then) $K$ is
an ellipsoid.\/}
With this formulation the result, in dimension 2, was declared by  Bertrand \cite{Ber} (1842). But his proof does not seem
complete. The result was proved by Brunn \cite{Br} (1889). Gruber \cite{Gr} (1974) proved the result under strongly
relaxed assumptions. A number of proofs of the result appear in the literature. See e.g. Danzer, Laugwitz and Lenz \cite{DLL} (1957),
that use the L\"owner ellipsoid of $K$, or Grinberg \cite{Gri} (1991) that uses an infinite sequence of symmetrizations.
We bring here, for the sake of completeness, a  proof that uses the uniqueness of the John ellipsoid of $K$.

We also point out \cite{MR1}
for a generalization, replacing the location of midpoints of chords by the location of centroids of sections
of any fixed dimension $k$, $1\leq k\leq n-1$.

\vspace{2mm}

\noindent
\begin{proof}[Proof of Lemma~\ref{lemma 1}] We notice that the property of $K$ presented in the lemma is preserved
under affine transformations (this is easy to see from the equivalent form of this property presented in the Remark
above). Thus, using an affine transformation, we may assume that John's ellipsoid of $K$ (the ellipsoid of maximal
volume contained in $K$) is the Euclidean unit ball $B_2^n$. We then want to show that $K$ is a homothetic Euclidean ball.

Let $u\in S^{n-1}$. By the assumption, $St_u(K)=A_u(K)$, $A_u$ affine with $P_u\,A_u=P_u$. Hence the John ellipsoid of $St_u(K)$
is $A_u(B_2^n)$. Now $|K|=|St_u(K)|=|A_u(K)|$, so $|\det(A_u)|=1$ and $|A_u(B_2^n)|=|B_2^n|$. By symmetry of $B_2^n$ about $u^{\perp}$
and the fact that $B_2^n\subset K$, we have $B_2^n\subset St_u(K)$. By the uniqueness of the John ellipsoid we conclude
that $A_u(B_2^n)=B_2^n$. Thus $A_u$ is a linear isometry with respect to the Euclidean norm, i.e. an orthogonal transformation.

The orthogonal transformation $A_u$ preserves $u^{\perp}$ by the assumption of the lemma, so it is either the identity or an
orthogonal
reflection by $u^{\perp}$. Using any of these possibilities for each $u\in S^{n-1}$, we see that $K$ is orthogonally symmetric about any
hyperplane through $0$. It follows that all the points of the boundary of $K$ have the same Euclidean norm.
Thus $K$ is a Euclidean ball centered at the origin.

This completes the proof of Lemma~\ref{lemma 1}, thus also the proof of Theorem~\ref{th-2}. \hfill \qed

\end{proof}

\end{proof}

\vskip 10mm
\small{

\noindent M. Meyer: Universit\'e Paris-Est - Marne-la-Vall\'ee, Laboratoire d'Analyse et de
Math\'ematiques Appliqu\'ees (UMR 8050), Cit\'e Descartes, 5 Bd Descartes, Champs-sur-Marne,
77454 Marne-la-Vall\'ee cedex 2, France.\newline
Email: mathieu.meyer@u-pem.fr
\vspace{2mm}

\noindent S. Reisner: Department of Mathematics, University of
Haifa, Haifa 31905, Israel.
\newline
Email: reisner@math.haifa.ac.il
}

\end{document}